\newtheorem{theorem}{Theorem}
\newtheorem{lemma}[theorem]{Lemma}
\newtheorem{dfn}[theorem]{Definition}
\newtheorem*{maintheorem*}{Main Theorem}
\newcommand{\N}{\mathbb{N}}
\newcommand{\w}{\omega}
\newcommand{\eps}{\epsilon}
\newcommand{\sh}{\sigma}
\newcommand{\ns}{\varnothing}
\newcommand{\BlockComment}[1]{}
\title{Specification and $\omega$-chaos in non-compact systems}
\author[C. Hammon]{Cordell Hammon}
\address[C. Hammon]{Department of Mathematics, Baylor University, Waco, TX 76798--7328,USA}
\email{cordell\_hammon@baylor.edu}
\author[J. Meddaugh]{Jonathan Meddaugh}
\address[J. Meddaugh]{Department of Mathematics, Baylor University, Waco, TX 76798--7328,USA}
\email{jonathan\_meddaugh@baylor.edu}
\author[J. Mohn]{Jasmin Mohn}
\address[J. Mohn]{Department of Mathematics, Baylor University, Waco, TX 76798--7328,USA}
\email{jasmin\_mohn1@baylor.edu}
\author[B. E. Raines]{Brian E. Raines}
\address[B. E. Raines]{Department of Mathematics, Baylor University, Waco, TX 76798--7328,USA}
\email{brian\_raines@baylor.edu}
\begin{document}
\subjclass[]{}
\keywords{}
\begin{abstract}
In this paper, we demonstrate conditions under which a Lindel\"{o}f dynamical system exhibits $\omega$-chaos. In particular, if a system exhibits a generalized version of the specification property and has at least three points with mutually separated orbit closures, then the system exhibits dense $\omega$-chaos.
\end{abstract}
\maketitle
\section{Introduction}

Chaotic behavior in dynamical systems is an area of considerable mathematical interest. Fittingly, there are a large number of non-equivalent notions of chaos  including Li-Yorke chaos \cite{Li-Yorke-Chaos}, distributional chaos \cite{distributional-chaos}, Devaney chaos \cite{devaney-chaos}, and $\w$-chaos \cite{IntroductionOfOmegaChaos}.

In this paper, we focus on $\w$-chaos, which was introduced by Li in 1993 and shown to be equivalent to positive topological entropy in the context of continuous interval maps \cite{IntroductionOfOmegaChaos}.  Briefly, a system exhibits $\omega$-chaos provided there exists an uncountable $\omega$-scrambled set, i.e., an uncountable set of points, each of which has $\omega$-limit set containing non-periodic points, and each pair of which have nonempty intersection but uncountable relative complements.
%

In 2009, Lampart and Oprocha demonstrated that a large class of shift spaces exhibit $\w$-chaos \cite{Lampart_And_Oprohca_ask_a_question}. In particular they demonstrate that (weak) specification and the existence of a non-transitive orbit is enough to guarantee that a non-degenerate shift space exhibits $\omega$-chaos. Briefly, a system $(X,f)$ exhibits \emph{(weak) specification} provided that for a desired tolerance $\delta$ there exists a ``relaxation time'' $N_\delta$ such that for $a_1\leq b_1<b_1+N_\delta\leq a_2\leq b_2<\cdots<b_{n-1}+N_\delta\leq a_n\leq b_n$ ($n=2$ for weak specification) and $x_1,\ldots x_n\in X$, there exists a point $z\in X$ such that the orbits of $z$ and $x_i$ agree for iterates of $f$ between $a_i$ and $b_i$.

The specification property was first studied by Bowen \cite{Intro_of_spec} and has implications in a wide variety of settings, including the existence of certain invariant measures \cite{Sigmund}. In addition, the relationship between the specification property and chaos has been well-studied, see e.g., \cite{spec_and_dense_distro_chaos,spec_and_distro_chaos, spec_and_distro_chaos_on_cmpct_metric_spaces, wang_and_wang}.

In their 2009 paper, Lampart and Oprocha asked whether every system with the specification property exhibits $\omega$-chaos. Meddaugh and Raines demonstrated that a weak form of specification was sufficient to ensure $\omega$-chaos for shift spaces \cite{WeakSpecAndBaire}. Hunter and Raines tendered a partial answer to this question for more general systems when they showed that compact metric systems with the specification property and uniform expansion near a fixed point exhibit $\w$-chaos \cite{REEVE}. 

The main result of this paper is a natural extension of this result to Lindel\"of systems.

\begin{maintheorem*}
	Let $(X,f)$ be an expansive dynamical system such that $X$ is Lindel{\"o}f, every open set of $X$ is uncountable, $(X,f)$ has ISP, and there exist $t_0$, $t_1, s \in X$ such that $\overline{Orb(t_0)}, \overline{Orb(t_1)}, \overline{Orb(s)}$ are each a non-zero distance away from each other. Then $(X,f)$ exhibits dense $\w$-chaos.
\end{maintheorem*}

\section{Preliminaries}\label{defs}
For the purposes of this paper, we adopt the following notational conventions. We use $\w$ to denote $\N\cup\{0\}$. Given a metric space $X$  with metric $d$ and $A,B\subseteq X$, we take $d(A,B)=\inf\{d(a,b):a\in A, b\in B\}$. A space $X$ is Lindel\"of provided that every open cover of $X$ has a countable subcover.

A dynamical system is a pair $(X,f)$ consisting of a metric space $X$ and continuous surjection $f:X\to X$. For a given system $(X,f)$, $f^n$ denotes the $n$-fold composition of $f$ 
and the orbit of a point $x\in X$ is the set $Orb_f(x)=\{f^n(x):n\in\omega\}$. A point $x$ is periodic with prime period $p$ provided that $f^p(x)=x$ and $p$ is the minimal iterate for which that holds. The $\omega$-limit set of $f$ in the system $(X,f)$ is the set of accumulation points of the orbit of $x$, i.e., $\omega_f(x)=\bigcap_{n\in\omega}\overline{\{f^i(x):i\geq n\}}$. When the function $f$ is clear from context, the subscripts on $\omega_f$ and $Orb_f$ will be supressed.

%
%

The primary object of investigation in this paper is the notion of $\omega$-chaos, as defined by Li \cite{IntroductionOfOmegaChaos}.

\begin{dfn}
	Let $(X,f)$ be a dynamical system.
	
	A subset $A$ of $X$ is \textbf{$\w$-scrambled} provided that, for $x\neq y\in A$, the following hold
	\begin{enumerate}
		\item $\w(x)\cap \w(y)\neq\ns$,
		\item $\w(x)\setminus\w(y)$ is uncountable, and
		\item $\w(x)$ contains non-periodic points
	\end{enumerate}   
	
	The system exhibits \textbf{$\w$-chaos} if it contains an uncountable $\w$-scrambled set. 
\end{dfn}

As mentioned in the introduction, the specification property, as defined by Bowen, will be important in what follows. 

\begin{dfn}
	We say a dynamical system $(X,f)$ has the \textbf{specification property} (SP) if for every $\delta>0$ there exists some $N_\delta \in \N$ such that for any finite collection of points $x_1, x_2, x_3\dots x_n\in X$ and any sequence 
	$0\leq a_1\leq b_1 < a_2\leq b_2<\dots<a_n\leq b_n$ with $a_{i+1}-b_i \geq N_\delta$ there is a point $p \in X$ such that $d(f^j(p), f^j(x_i)) < \delta$ for $a_i\leq j\leq b_i$.
\end{dfn}

In the context of non-compact dynamical systems, we will make use of a generalized notion of the specification property. It is not surprising that such a generalization is appropriate---indeed similar generalizations naturally appear in the study of $\omega$-limits in non-compact settings \cite{OmegaBaire}.

\begin{dfn}
	We say a dynamical system $(X,f)$ has the \textbf{infinite specification property} (ISP) if for every $\delta>0$ there exists some $N_\delta \in \N$ such that for any countable collection of points $x_1, x_2, x_3, \ldots \in X$ and any sequence 
	$0\leq a_1\leq b_1 < a_2\leq b_2<\dots$ with $a_{i+1}-b_i \geq N_\delta$ there is a point $p \in X$ such that $d(f^j(p), f^j(x_i)) < \delta$ for $a_i\leq j\leq b_i$.
\end{dfn}

This definition may seem too specific to be applicable. But as we will see, there are many systems with ISP. It is worth nothing that, in compact systems, the two forms of specification are equivalent.

\begin{lemma}
	If $(X,f)$ has ISP then it has SP. If $X$ is compact, then the converse holds. 
\end{lemma}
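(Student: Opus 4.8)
The plan is to prove the two implications separately; neither is deep, so this will be a short padding argument followed by a routine compactness argument, and the only point requiring care is that limits degrade strict inequalities.

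For \textbf{ISP $\Rightarrow$ SP}: fix $\delta>0$ and let $N_\delta$ be the relaxation time witnessing ISP for this $\delta$; I claim the same $N_\delta$ witnesses SP. Given a finite collection $x_1,\dots,x_n\in X$ and times $0\le a_1\le b_1<\dots<a_n\le b_n$ with $a_{i+1}-b_i\ge N_\delta$, extend the data to a countable collection by setting $x_i:=x_1$ and $a_i=b_i:=b_{i-1}+N_\delta$ for all $i>n$. Since $N_\delta\ge 1$, the extended sequence still satisfies $0\le a_1\le b_1<a_2\le\cdots$ and all consecutive gaps are at least $N_\delta$. Applying ISP to this extended data produces $p\in X$ with $d(f^j(p),f^j(x_i))<\delta$ for $a_i\le j\le b_i$ and every $i$; restricting to $i\le n$ gives exactly the conclusion of SP.

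For the converse, assume $X$ is compact and $(X,f)$ has SP. Fix $\delta>0$, apply SP with tolerance $\delta/2$ to obtain a relaxation time $N_{\delta/2}$, and set $N_\delta:=N_{\delta/2}$. Given a countable collection $x_1,x_2,\dots\in X$ and times $0\le a_1\le b_1<a_2\le b_2<\cdots$ with $a_{i+1}-b_i\ge N_\delta$, for each $k\in\N$ apply SP to the finite subcollection $x_1,\dots,x_k$ with the first $k$ blocks of times to obtain a point $p_k\in X$ with $d(f^j(p_k),f^j(x_i))<\delta/2$ for $a_i\le j\le b_i$ and $1\le i\le k$. By compactness of $X$, pass to a subsequence $p_{k_m}\to p$ for some $p\in X$.

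It remains to check that $p$ works. Fix $i\in\N$ and an integer $j$ with $a_i\le j\le b_i$. For every $m$ with $k_m\ge i$ we have $d(f^j(p_{k_m}),f^j(x_i))<\delta/2$; since $f^j$ is continuous and $p_{k_m}\to p$, we get $f^j(p_{k_m})\to f^j(p)$, whence $d(f^j(p),f^j(x_i))\le\delta/2<\delta$. As $i$ and $j$ were arbitrary, $p$ satisfies the ISP condition for this data with relaxation time $N_\delta$, so $(X,f)$ has ISP. The main (and only) obstacle is this loss of strictness under limits, which is handled in advance by running SP with the halved tolerance $\delta/2$; note also that a single subsequence $p_{k_m}$ serves all indices $i$ at once, so no diagonalization is required.
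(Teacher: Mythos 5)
Your proof is correct and follows essentially the same route as the paper: the same padding of the finite data to countable data for ISP $\Rightarrow$ SP, and the same compactness/accumulation-point argument with tolerance $\delta/2$ for the converse. Your write-up is just slightly more explicit about how the extended time blocks are chosen and about why a single convergent subsequence suffices.
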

\begin{proof}
	Suppose $(X,f)$ is a system with ISP. Then for any finite collection of points $x_1, x_2,\ldots, x_n$ there is a countable collection of points $y_1, y_2,\ldots$ such that $y_i = x_i$ for all $i\leq n$. There exists a point $p\in X$ that witnesses ISP with respect to $y_1, y_2,\ldots$ and thus witnesses SP with respect to $x_1, x_2,\ldots, x_n$.
	
	Suppose now that $(X,f)$ is a compact dynamical system with SP. Fix $\delta >0$ and let $N_{\delta/2}$ be given by SP. Let $x_1,x_2,\dots$ be a countable collection of points in $X$, and let $a_1\leq b_1 < a_2\leq b_2<\dots$ with $a_{i+1}-b_i \geq N_{\delta/2}$ be a sequence of natural numbers. 
	For each $n\in \N$ we can find $p_n$ so that $d(f^j(p_n), f^j(x_i)) < \delta/2$ for $a_i\leq j\leq b_i, i\leq n$. The set $\{p_n\}_{n\in\N}$ is an infinite subset of a compact space and thus has an accumulation point, $q$. 
	Since $f^i$ is continuous for all $i\in \N$ we have $\lim_{n\to\infty}d(f^i(p_n), f^i(q))\to 0$ and so $d(f^j(q), f^j(x_i)) \leq \delta/2 < \delta$ for $a_i\leq j\leq b_i$.
\end{proof}

For the purposes of this paper, we will make use of the fact that for a countable (finite or infinite) alphabet $\Sigma$, the shift space on $\Sigma$ exhibits ISP. 

\begin{dfn}
	Let $\Sigma$ be a nonempty collection of symbols. The space $\Sigma^\w$ is the space of all sequences $(x_0, x_1, x_2,\ldots)$ where $x_i\in \Sigma$ for all $i\in\w$. 
	The space is equipped with a map $\sh$ called \textbf{the shift map} where $\sh(x_0, x_1, x_2,\ldots) = (x_1, x_2, x_3,\ldots)$.
	
	In the case that $\Sigma$ is a collection of numbers, $\Sigma^\w$ is equipped with a metric. $$d((x_0, x_1,\ldots), (y_0, y_1,\ldots)) = \sum_{i=0}^\infty\frac{\min\{|x_i-y_i|, 1\}}{2^i}$$
\end{dfn}

Of particular import going forward is the shift map with $\Sigma=2=\{0,1\}$, i.e. $2^\w = \{(x_0, x_1,\ldots) : x_i\in \{0,1\}\text{ for all } i\in \w\}$ due to the fact that Lampart and Oprocha demonstrated that it exhibits $\w$-chaos \cite{Lampart_And_Oprohca_ask_a_question}.

\begin{lemma}
    The shift space $\Sigma ^\w$ with the shift map has ISP if $|\Sigma|\geq 2$.
\end{lemma}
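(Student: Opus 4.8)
The plan is to reduce the claim to the elementary fact that closeness in the shift metric is the same as agreement on a long enough initial block, and then to build the required point $p$ coordinate by coordinate. We may assume $\Sigma$ is a set of numbers, so that the metric $d$ is defined; the only feature of $\min\{|a-b|,1\}$ we use is that it is at most $1$ and equals $0$ when $a=b$. Thus if two sequences $u,v\in\Sigma^\w$ agree on coordinates $0,1,\dots,m$, then $d(u,v)\le\sum_{l=m+1}^{\infty}2^{-l}=2^{-m}$. In particular, if $p$ and some $x\in\Sigma^\w$ agree on coordinates $j,j+1,\dots,j+m$, then $\sh^j(p)$ and $\sh^j(x)$ agree on their first $m+1$ coordinates, and hence $d(\sh^j(p),\sh^j(x))\le 2^{-m}$.

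Given $\delta>0$, I would fix $m\in\N$ with $2^{-m}<\delta$ and set $N_\delta=m+1$. Suppose now that $x_1,x_2,\dots\in\Sigma^\w$ and $0\le a_1\le b_1<a_2\le b_2<\cdots$ are given with $a_{i+1}-b_i\ge N_\delta$ for every $i$. Put $I_i=\{a_i,a_i+1,\dots,b_i+m\}$. The key observation is that the $I_i$ are pairwise disjoint: since $b_i<a_{i+1}\le b_{i+1}<\cdots$, the left endpoints $a_i$ are strictly increasing, so it suffices to rule out $I_i\cap I_{i+1}$ being nonempty, and if some $k$ lay in both then $a_{i+1}\le b_i+m$, i.e.\ $a_{i+1}-b_i\le m<N_\delta$, contradicting the gap hypothesis.

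With disjointness established, I would define $p=(p_0,p_1,\dots)\in\Sigma^\w$ by setting $p_k=(x_i)_k$ whenever $k\in I_i$ for some (necessarily unique) $i$, and $p_k=c$ for a fixed $c\in\Sigma$ otherwise. Then, for each $i$ and each $j$ with $a_i\le j\le b_i$, all of the coordinates $j,j+1,\dots,b_i+m$ lie in $I_i$; since there are $b_i+m-j+1\ge m+1$ of them, $p$ agrees with $x_i$ on at least the first $m+1$ coordinates counted from position $j$, whence $d(\sh^j(p),\sh^j(x_i))\le 2^{-m}<\delta$ by the observation above. Thus $p$ witnesses ISP for this $\delta$ and this data. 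I do not expect a genuine obstacle here; the only point requiring care is to account for the ``lookahead'' length $m$ simultaneously in the relaxation time $N_\delta=m+1$ and in the set $I_i$ of coordinates on which $p$ is forced to copy $x_i$, after which everything is bookkeeping with a geometric series.
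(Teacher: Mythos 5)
Your proposal is correct and follows essentially the same route as the paper's proof: choose a lookahead length from the metric estimate, copy each $x_i$ onto the block $[a_i,b_i+m]$ (which the gap condition keeps disjoint), and conclude via the geometric-series bound. Your version is slightly more careful about the disjointness of the blocks and the off-by-one in the metric estimate (the paper absorbs this by working with $\delta/2$), but the construction is the same.
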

\begin{proof}
    Let $\delta > 0$ and let $N$ be such that $2^{-N}<\delta/2$. Let $x_1, x_2, \dots$ be a countable collection of points in $\Sigma^\N$, and let $a_1\leq b_1 <a_2\leq b_2 < a_3\leq b_3<\dots$ be a sequence of natural numbers such that $a_i - b_{i-1} > N$. 
    
    Finally, let $y$ be a point in $\Sigma^\N$ such that $y_{[a_i, b_i+N]} = (x_i)_{[a_i, b_i+N]}$. Note that such points exist since $a_i - b_{i-1} > N$. Then $\sh^j(y)$ and $\sh^{j}(x_i)$ agree for at least $N$ symbols when $a_i\leq j\leq b_i$, so $d(\sh^j(y), \sh^{j}(x_i)) < 2^{-N} < \delta$ when $a_i\leq j\leq b_i$ as required for ISP.  
\end{proof}

%

In order to make discussing the patterns of the orbit of a point given by specification easier, we introduce the following lemma.
\begin{lemma} \label{spec pattern}
	Let $(X,f)$ be a dynamical system with infinite specification and fix $\delta >0$. Let $N_\delta>0$ witness specification. For $z_0, z_1,\ldots \in X$, $c_0,c_1,\ldots \in \omega$, and $M\geq N_\delta$, there exists a $p\in X$ such that the $f^i(p)$ is less than $\delta$ away from the $i$-th term in the following sequence whenever that term is not $*$. (We use the convention that $(*)^M$ denotes $M$-many consecutive entries which are $*$.)	
	\begin{align*}z_0, f(z_0), \dots, f^{c_0}(z_0), &(*)^{M}, z_1, f(z_1), \dots, f^{c_1}(z_1), (*)^{M},\ldots\\
		 \dots, &(*)^{M},z_n, f(z_n), \dots, f^{c_n}(z_n), (*)^{M}, \ldots\end{align*}
\end{lemma}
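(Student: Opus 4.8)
The plan is to recognize this lemma as essentially a repackaging of ISP, once one accounts for a small discrepancy: ISP produces a point whose iterates track $f^j(x_i)$ at the \emph{absolute} time $j$, whereas the displayed pattern asks that the iterates of $p$ track the orbit of $z_k$ \emph{starting from} $z_k$ at the position where the $k$-th block begins. I would bridge this gap by exploiting the surjectivity of $f$.

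First I would set up the bookkeeping for the positions of the blocks. Let $a_1 = 0$ and $b_1 = c_0$, and recursively define $a_{k+1} = b_k + M + 1$ and $b_{k+1} = a_{k+1} + c_k$, so that the $k$-th block $z_{k-1}, f(z_{k-1}), \ldots, f^{c_{k-1}}(z_{k-1})$ occupies exactly the positions $a_k, a_k+1, \ldots, b_k$, and consecutive blocks are separated by $M$ entries equal to $*$. By construction $a_{k+1} - b_k = M + 1 \geq N_\delta$, so the sequence $a_1 \leq b_1 < a_2 \leq b_2 < \cdots$ satisfies the gap hypothesis required to invoke ISP.

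Next, for each $k \geq 1$, since $f$ is a surjection the iterate $f^{a_k}$ is surjective as well, so I would choose a point $x_k \in X$ with $f^{a_k}(x_k) = z_{k-1}$ (for $k = 1$ one simply takes $x_1 = z_0$). Then for every $j$ with $a_k \leq j \leq b_k$, writing $j = a_k + i$ with $0 \leq i \leq c_{k-1}$, one computes $f^j(x_k) = f^i(f^{a_k}(x_k)) = f^i(z_{k-1})$, which is precisely the $j$-th entry of the displayed sequence. Finally I would apply ISP to the countable collection $x_1, x_2, \ldots$ together with the sequence $a_1 \leq b_1 < a_2 \leq b_2 < \cdots$ to obtain a point $p \in X$ with $d(f^j(p), f^j(x_k)) < \delta$ whenever $a_k \leq j \leq b_k$; by the preceding identification this says exactly that $f^j(p)$ is within $\delta$ of the $j$-th term of the displayed sequence at every position where that term is not $*$, which is the conclusion of the lemma.

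The only genuine subtlety is the absolute-versus-relative time-shift issue described above, which is why surjectivity of $f$ (guaranteed by the definition of a dynamical system) is needed rather than invertibility; everything else is routine bookkeeping, and in particular no compactness is invoked since we are handed a countable instance of specification directly.
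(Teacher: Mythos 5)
Your proof is correct and follows essentially the same route as the paper's: define the $a_k,b_k$ from the $c_k$ and $M$, pull each $z_k$ back by $a_k$ iterates using surjectivity, and invoke ISP. (Your gap $a_{k+1}=b_k+M+1$ is in fact the choice that yields exactly $M$ starred entries between blocks, whereas the paper uses $a_{i+1}=b_i+M$; this off-by-one is immaterial to every application of the lemma.)
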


\begin{proof}
	Fix $z_0, z_1,\ldots \in X$, $c_0,c_0,\ldots \in \omega$, and $M\geq N_\delta$. We will apply the infinite specification property to the sequence $x_0,x_1,\ldots$ and  $0\leq a_0\leq b_0<a_1\leq b_1\dots$ defined as follows. Fix $a_0=0$ and $b_0=c_0$. If $b_i$ is defined, choose $a_{i+1}=b_i+M\geq b_i+N_\delta$. If $a_i$ is defined, choose $b_i=a_i+c_i$. By surjectivity, choose $x_i\in f^{-a_i}(z_i)$. By infinite specification property, choose a point $p\in X$ such that $d(f^j(p),f^j(x_p))<\delta$ when $a_i\leq j\leq b_i$. It is immediately clear that $p$ satisfies the conclusion of the lemma.
\end{proof}
If $(X,f)$ has only the standard specification property, a similar result holds for any finite lists $x_0,\ldots x_n$ and $c_0,\ldots c_n$.

Our main result requires a form of local expansion.

\begin{dfn}
    We say $f$ is \textbf{expansive} if there is some $\eta > 0, \lambda>1$ such that if $0< d(x, y) < \eta$ then $d(f(x), f(y))> \lambda d(x,y)$  for all $x, y \in X$. 
    
\end{dfn}

The following lemma will be used frequently in the following section.

\begin{lemma}\label{Expansivity_Implies_Closeness}
	Let $(X,f)$ be expansive and let $\eta > 0, \lambda >1$ witness expansivity. 
	If $p, q\in X, j\in \N$ are such that $d(f^i(p), f^i(q)) < \eta$ for all $i\leq j$, then $d(p ,q) < \eta\lambda^{-j}$. 
	Moreover, if $p \in X$ and $\{q_n\}_{n\in \N}$ is a sequence of points such that $d(f^i(p), f^i(q_n)) < \eta$ for all $i\leq j_n$ where $j_n\to \infty$. Then $q_n \to p$.
	
\end{lemma}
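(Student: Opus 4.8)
The plan is to establish the quantitative claim by induction on $j$, and then read off the convergence statement as an immediate corollary. Throughout, $\eta>0$ and $\lambda>1$ are fixed witnesses of expansivity.

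For the induction, the base case $j=0$ is trivial: the hypothesis $d(f^0(p),f^0(q))<\eta$ is exactly $d(p,q)<\eta=\eta\lambda^{-0}$. For the inductive step, assume the statement holds for $j-1$ and suppose $d(f^i(p),f^i(q))<\eta$ for all $i\le j$. Then in particular $d(f^i(f(p)),f^i(f(q)))=d(f^{i+1}(p),f^{i+1}(q))<\eta$ for all $i\le j-1$, so the induction hypothesis applied to the pair $f(p),f(q)$ gives $d(f(p),f(q))<\eta\lambda^{-(j-1)}$. Now I split into cases. If $d(p,q)=0$ then $d(p,q)=0<\eta\lambda^{-j}$ and we are done. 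If $d(p,q)>0$, then since also $d(p,q)<\eta$ (the $i=0$ instance of the hypothesis), expansivity yields $\lambda\, d(p,q)<d(f(p),f(q))<\eta\lambda^{-(j-1)}$, hence $d(p,q)<\eta\lambda^{-j}$, completing the induction.

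For the ``moreover'' part, apply the first assertion to each $q_n$ with $j=j_n$: the hypothesis $d(f^i(p),f^i(q_n))<\eta$ for all $i\le j_n$ gives $d(p,q_n)<\eta\lambda^{-j_n}$. Since $\lambda>1$ and $j_n\to\infty$, we have $\eta\lambda^{-j_n}\to 0$, so $d(p,q_n)\to 0$, i.e. $q_n\to p$.

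I do not expect any genuine obstacle here; the only point requiring a moment's care is that expansivity says nothing when $d(p,q)=0$, so that degenerate case must be handled separately (though its conclusion is immediate), and one should note that expansivity is invoked legitimately because the $i=0$ hypothesis guarantees $d(p,q)<\eta$.
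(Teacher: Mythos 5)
Your proof is correct and takes essentially the same approach as the paper: both arguments simply iterate the expansivity inequality $j$ times to obtain $d(p,q)<\eta\lambda^{-j}$ and then let $j_n\to\infty$ for the convergence claim. Your version is organized as a formal induction and, commendably, handles the degenerate case $d(p,q)=0$ that the paper's chain of strict inequalities silently skips over (expansivity as defined only applies when $0<d(x,y)<\eta$).
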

\begin{proof}
	Since $d(f^i(p), f^i(q)) < \eta$ for all $i\leq j$ we have $\eta > d(f^j(p), f^j(q)) > \lambda d(f^{j-1}(p), f^{j-1}(q)) > \lambda^2 d(f^{j-2}(p), f^{j-2}(q)) > \dots > \lambda^j d(p, q).$ Dividing both sides by $\lambda^j$ yields the result. 
	
	From here, $d(f^{j_n}(p), f^{j_n}(q_n)) < \eta$ implies $d(p, q_n) < \eta\lambda^{-j_n}$. Since $j_n\to \infty$ we have $q_n\to p$.
\end{proof}

It is worth noting that $\w$-chaos is a rather local property. There are cases where a dynamical system $(X, f)$ exhibits $\w$-chaos even though the only ``chaotic" part of $X$ is a measure theoretically small portion of the space. 
Consider a finite measure system $(X,f)$ with $\w$-chaos and affix to it a much larger space $X'$ and extend $f$ to $X'$ in such a way that $f|X'$ is the identity.
The new system $(X\cup X', f)$ still has $\w$-chaos, but the chaos is relegated to the comparatively small portion of the system, $X$. To consider a more global form of $\w$-chaos we introduce the following. 

\begin{dfn}
	We say a system exhibits \textbf{dense $\w$-chaos} if any open set contains an uncountable $\w$-scrambled set. 
\end{dfn}

 Systems with dense $\w$-chaos do exist. In fact, $2^\w$ has dense $\w$-chaos 
 To see this, let $\alpha\in 2^\w$. Note that prepending any finite word $w$ to an infinite word $\alpha$ does not alter the $\w$-limit set of $\alpha$. 
 That is to say $\w(w\alpha) = \w(\alpha)$. 
 Note, also, that basic open sets in $2^\w$ take the form of cylinder sets. So pick any finite word $w\in 2^\w$. Prepend every point in $2^\w$ with $w$ to get the open set $[w]\subseteq 2^\w$. This open set exhibits $\w$-chaos.

\section{Existence of $\omega$-chaos in certain Lindel{\"o}f systems}
In this section, we will prove the following theorem.
\begin{maintheorem*}\label{MainThm}
Let $(X,f)$ be an expansive dynamical system such that $X$ is Lindel{\"o}f, every open set of $X$ is uncountable, $(X,f)$ has ISP, and there exist $t_0$, $t_1, s \in X$ such that $\overline{Orb(t_0)}, \overline{Orb(t_1)}, \overline{Orb(s)}$ are each a non-zero distance away from each other. Then $(X,f)$ exhibits dense $\w$-chaos.
\end{maintheorem*}

We begin with the following constructions.
Let $(X,f)$ be given as in the main theorem. Let $\eta, \lambda$ witness the expansivity of $f$. Let $t_0, t_1, s$ be three points 
such that the orbit closures of these points have non-zero distance between each other.
Fix $\xi$ in $X$ and $U$ a neighborhood thereof. Let $D>0$ be small enough so that the ball of radius $D$ around $\xi$ is contained entirely within $U$.
Fix $\epsilon > 0$ such that $$\min\{d(\overline{Orb(t_o)}, \overline{Orb(t_1)}), d(\overline{Orb(t_o)}, \overline{Orb(s)}), d(\overline{Orb(t_1)}, \overline{Orb(s)}), D, \eta\}> 2\epsilon.$$ 
Lastly, let $N$ be given by ISP with respect to $\epsilon / 2$
and let $P\in \N$ such that 
$P> N$. 
We construct two sequences of numbers as follows: $a_0 = 0$, if $a_i$ is defined, let $b_i = a_i + P$, and if $b_i$ is defined, let $a_{i+1} = b_i+N$.

With that construction done, we are now ready to leverage ISP.
\begin{dfn}
    For $\beta \in 2^\omega$, let $E_\beta = \{x\in X : a_i\leq j\leq b_i, d(f^j(x), f^{j-a_i}(t_{\beta_i}))\leq \frac{\epsilon}{2}\text{ for all }i\in\w\}$. 
\end{dfn}

Note that $E_\beta \neq \varnothing$ due to ISP. 

\begin{lemma}
    $E_\beta$ is closed, and $E_\beta\cap E_\gamma \neq \varnothing$ if and only if $\beta = \gamma$.
\end{lemma}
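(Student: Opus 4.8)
The plan is to handle the two assertions separately. For closedness of $E_\beta$: the defining conditions are a countable family of closed conditions. Specifically, for each $i \in \w$ and each $j$ with $a_i \le j \le b_i$, the set $\{x \in X : d(f^j(x), f^{j-a_i}(t_{\beta_i})) \le \epsilon/2\}$ is the preimage under the continuous map $f^j$ of a closed ball, hence closed; $E_\beta$ is the intersection of all of these over $i$ and $j$, so it is closed. This is routine.

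For the second assertion, one direction is trivial: if $\beta = \gamma$ then $E_\beta = E_\gamma$, which is nonempty by ISP, so the intersection is nonempty. The substantive direction is the contrapositive: if $\beta \ne \gamma$ then $E_\beta \cap E_\gamma = \varnothing$. Suppose $\beta_i \ne \gamma_i$ for some $i$; without loss of generality $\beta_i = 0$ and $\gamma_i = 1$. If $x \in E_\beta \cap E_\gamma$, then for every $j$ with $a_i \le j \le b_i$ we have simultaneously $d(f^j(x), f^{j-a_i}(t_0)) \le \epsilon/2$ and $d(f^j(x), f^{j-a_i}(t_1)) \le \epsilon/2$. By the triangle inequality, $d(f^{j-a_i}(t_0), f^{j-a_i}(t_1)) \le \epsilon$ for all such $j$. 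But $f^{j-a_i}(t_0) \in \overline{Orb(t_0)}$ and $f^{j-a_i}(t_1) \in \overline{Orb(t_1)}$, and by the choice of $\epsilon$ we have $d(\overline{Orb(t_0)}, \overline{Orb(t_1)}) > 2\epsilon > \epsilon$, a contradiction. (Even a single admissible $j$, e.g. $j = a_i$, suffices.)

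I do not anticipate a genuine obstacle here; the lemma is essentially a bookkeeping consequence of the separation hypothesis on the orbit closures and the smallness of $\epsilon$ relative to that separation. The only point requiring a moment's care is making sure the window $[a_i, b_i]$ is nonempty so that there is at least one iterate $j$ at which to derive the contradiction — this holds since $b_i = a_i + P > a_i$. The expansivity constant $\eta$ and Lemma~\ref{Expansivity_Implies_Closeness} are not needed for this particular lemma; they will presumably enter later when extracting points of the various $E_\beta$ with prescribed $\w$-limit behavior.
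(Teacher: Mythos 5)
Your proof is correct and follows essentially the same route as the paper's: closedness from continuity of the maps $f^j$ (you phrase it via preimages of closed balls, the paper via sequences), and disjointness for $\beta\neq\gamma$ via the triangle inequality against the separation $d(\overline{Orb(t_0)},\overline{Orb(t_1)})>2\epsilon$, with nonemptiness of $E_\beta$ (hence the ``if'' direction) supplied by ISP. No issues.
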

\begin{proof}
    Let $\{p_k\}_{k\in\w}$ be a sequence of points in $E_\beta$ which limits to $p$. Choose $i\in \w$ and choose some $j\in \{a_i,\dots, b_i\}$. 
    Since $f^j$ is a continuous function, $f^j(p_k) \to f^j(p)$ and thus $d(f^j(p), f^{j-a_i}(t_{\beta_i}))\leq \epsilon/2$. So $p\in E_\beta$.
    
    To verify the other claim, let $p\in E_\beta \cap E_\gamma$. Then for any $j\in\{a_i,\dots,b_i\}$ we have that $d(f^j(p), f^{j-a_i}(t_{\beta_i}))\leq \epsilon/2$ and $d(f^j(p), f^{j-a_i}(t_{\gamma_i}))\leq \epsilon/2$. But, since $d(Orb(t_0), Orb(t_1)) > 2\epsilon$, we must have $t_{\beta_i}=t_{\gamma_i}$ for all $i$ and thus $\beta=\gamma$.
\end{proof}

\begin{lemma}\label{E_beta_not_periodic}
    If $\beta\in 2^\w$ is not periodic, then $E_\beta$ does not contain periodic points. 
\end{lemma}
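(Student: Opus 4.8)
The plan is to argue by contradiction. Suppose $\beta \in 2^\w$ is not periodic, yet $E_\beta$ contains a periodic point $x$, say with $f^q(x) = x$ for some $q \geq 1$. The key structural observation is that the left endpoints of the blocks used to define $E_\beta$ form an arithmetic progression: from $b_i = a_i + P$ and $a_{i+1} = b_i + N$ we get $a_{i+1} - a_i = P + N$ for every $i$, hence $a_{i+q} - a_i = q(P+N)$, which is a multiple of $q$. Since $x$ has period $q$, this forces $f^{a_{i+q}}(x) = f^{q(P+N)}\bigl(f^{a_i}(x)\bigr) = f^{a_i}(x)$ for every $i$.

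Next I would use non-periodicity of $\beta$: since $\beta$ is not periodic it is in particular not invariant under the $q$-fold shift, so there is an index $i$ with $\beta_i \neq \beta_{i+q}$. For this $i$, taking $j = a_i$ in the definition of $E_\beta$ gives $d\bigl(f^{a_i}(x), t_{\beta_i}\bigr) \leq \tfrac{\epsilon}{2}$, and taking $j = a_{i+q}$ gives $d\bigl(f^{a_{i+q}}(x), t_{\beta_{i+q}}\bigr) \leq \tfrac{\epsilon}{2}$. Since $f^{a_{i+q}}(x) = f^{a_i}(x)$, the triangle inequality yields $d\bigl(t_{\beta_i}, t_{\beta_{i+q}}\bigr) \leq \epsilon$. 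But $\beta_i \neq \beta_{i+q}$ means $\{t_{\beta_i}, t_{\beta_{i+q}}\} = \{t_0, t_1\}$, so $d(t_0,t_1) \leq \epsilon$; this contradicts $d\bigl(\overline{Orb(t_0)}, \overline{Orb(t_1)}\bigr) > 2\epsilon$ since $t_0 \in \overline{Orb(t_0)}$ and $t_1 \in \overline{Orb(t_1)}$.

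I do not expect a real obstacle here. The two points that warrant a line of justification are (a) that the block start times increase by the constant $P+N$, which is immediate from the recursion defining $a_i$ and $b_i$, and (b) the (routine) fact that a non-periodic sequence is not fixed by the $q$-th power of the shift for the particular $q$ at hand. One could instead run the argument with $r = q/\gcd(q, P+N)$ in place of $q$, but using $r = q$ keeps the bookkeeping trivial. The conceptual heart is simply that periodicity of $x$ makes a single orbit point $f^{a_i}(x)$ shadow two different anchor points $t_{\beta_i}$ and $t_{\beta_{i+q}}$, which the separation hypothesis forbids once $\beta_i \neq \beta_{i+q}$.
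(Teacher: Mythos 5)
Your proof is correct and follows essentially the same route as the paper's: both exploit that $a_{i+m}-a_i=m(P+N)$ can be made a multiple of the period so that $f^{a_i}(x)$ must shadow both $t_{\beta_i}$ and $t_{\beta_{i+m}}$, contradicting the separation of the orbit closures. The paper takes $m=\operatorname{lcm}(K,N+P)/(N+P)$ and concludes $\beta$ is $m$-periodic, whereas you take $m=q$ outright, which is the same idea with slightly simpler bookkeeping.
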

\begin{proof}
   Suppose $q\in E_\beta$ is periodic with period $K$. Then $d(f^j(q), f^{j-a_i}(t_{\beta_i}))\leq\eps/2$ for all $i\in \w, a_i\leq j\leq b_i$.
    Note that $a_i = i(N+P)$ 
    so $a_i+ a_h = i(N+P) + h(N+P) = (i+h)(N+P) = a_{i+h}$.   
    Let $L = lcm(K, N+P)$ and choose $m$ such that $L=m(N+P)$ and note that $L=a_m$.
    Moreover, $f^{L+j}(q) = f^j(q)$, and thus we have the following.
    \begin{align*}
         d(t_{\beta_{m+i}}, t_{\beta_i}) &\leq d(t_{\beta_{m+i}}, f^{a_i}(q)) + d(f^{a_i}(q), t_{\beta_i})\\
         &=d(t_{\beta_{m+i}}, f^{L+a_i}(q)) + d(f^{a_{i}}(q), t_{\beta_i}) \\
         &=d(t_{\beta_{m+i}}, f^{a_m+a_i}(q)) + d(f^{a_{i}}(q), t_{\beta_i}) \\
         &=d(t_{\beta_{m+i}}, f^{a_{m+i}}(q)) + d(f^{a_{i}}(q), t_{\beta_i}) \\
         &\leq \eps
        \nonumber
    \end{align*}

    Since $Orb(t_{\beta_0})$ and $Orb(t_{\beta_1})$ are $2\eps$ apart this means that $\beta_{i+m} = \beta_i$ for all $i\in\w$, i.e. $\beta$ is periodic. 
\end{proof}

\begin{lemma}\label{we_can_spec_images_of_beta}
    Let $\beta\in 2^\w$ and let $q\in E_\beta$. Then $f^{a_k}(q) \in E_{\sh^k(\beta)}$. 
\end{lemma}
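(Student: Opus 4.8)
The plan is to unwind the definition of $E_{\sh^k(\beta)}$ and reduce membership in it, via a shift of index, to the hypothesis $q\in E_\beta$. The first thing I would record is the arithmetic backbone of the construction: since $a_0=0$, $b_i=a_i+P$, and $a_{i+1}=b_i+N$, we have $a_i=i(N+P)$ for every $i$, hence the additivity $a_{i+k}=a_i+a_k$ and, correspondingly, $b_{i+k}=a_{i+k}+P=b_i+a_k$. This is the only structural fact that is needed.

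Next, fix $i\in\w$ and $j$ with $a_i\le j\le b_i$; the goal is to bound $d\bigl(f^j(f^{a_k}(q)),\,f^{j-a_i}(t_{(\sh^k\beta)_i})\bigr)$ by $\eps/2$. Put $j'=j+a_k$, so that $f^j(f^{a_k}(q))=f^{j'}(q)$. By the additivity above, $a_i\le j\le b_i$ gives $a_{i+k}=a_i+a_k\le j'\le b_i+a_k=b_{i+k}$, so $j'$ lies in the block of indices associated to the coordinate $i+k$. Applying the defining condition of $q\in E_\beta$ with index $i+k$ in place of $i$ and $j'$ in place of $j$ yields $d\bigl(f^{j'}(q),\,f^{j'-a_{i+k}}(t_{\beta_{i+k}})\bigr)\le\eps/2$. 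Finally I would simplify $j'-a_{i+k}=j+a_k-(a_i+a_k)=j-a_i$ and observe that $\beta_{i+k}=(\sh^k\beta)_i$, so this is exactly the inequality required for $f^{a_k}(q)\in E_{\sh^k(\beta)}$. As $i$ and $j$ were arbitrary in the appropriate ranges, the lemma follows.

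I do not anticipate a genuine obstacle here: the statement is essentially the bookkeeping identity expressing that the blocks defining $E_\beta$ sit on the arithmetic progression of anchor times $a_i$, so advancing the point by $a_k$ iterations simply relabels the symbolic data by $k$. The only point requiring care is verifying that the index shift $i\mapsto i+k$ keeps $j'=j+a_k$ inside an admissible block, and this is precisely where the constant-gap structure ($b_i=a_i+P$, $a_{i+1}=b_i+N$), equivalently $a_{i+k}=a_i+a_k$, is used.
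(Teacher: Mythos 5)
Your proof is correct and follows the same route as the paper: reduce membership in $E_{\sh^k(\beta)}$ to the defining condition of $E_\beta$ at the shifted index $i+k$, using $a_{i+k}=a_i+a_k$. In fact your write-up makes explicit the index arithmetic that the paper's own (very terse) proof leaves implicit.
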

\begin{proof}
    Fix $k\in \w$.
    By definition of $E_\beta$, $d(f^j(q), f^{j-a_i}(t_{\beta_i}))\leq \frac{\epsilon}{2}, a_i\leq j\leq b_i\text{ for all }i\in\w$ and so it certainly holds for all $i>k$. So $d(f^j(f^{a_k}(q)), f^{j-a_i}(t_{\sh^k(\beta)_i}) \leq \eps/2$. 
\end{proof}
Going forward, the following sets will be useful \[G_\beta = {\bigcup_{i\in \w} f^i(E_\beta)}, \quad H_\beta' = {\bigcup_{\alpha\in \w_\sh(\beta)}G_\alpha}, \quad H_\beta = \overline{\bigcup_{\alpha\in \w_\sh(\beta)}G_\alpha}\]

The definition of $H_\beta$ leads to the following useful lemmas which will be used to establish that certain $\w$-limit sets have uncountable set differences. The first lemma is immediate from the definitions.

\begin{lemma}\label{H_beta_sets_are_uncountable}
    If $\beta\in 2^\w$ is such that $\w(\beta)$ is uncountable, then $H_\beta'$ and $H_\beta$ are uncountable. 
\end{lemma}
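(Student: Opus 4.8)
The plan is to exploit the disjointness of the family $\{E_\alpha : \alpha \in 2^\w\}$ established in the earlier lemma, together with the non-emptiness of each $E_\alpha$ coming from ISP. The idea is to build an injection from $\w_\sh(\beta)$ into $H_\beta'$ (hence into $H_\beta$), which immediately transfers uncountability from the hypothesis to the conclusion.

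Concretely, first I would observe that for each $\alpha \in \w_\sh(\beta) \subseteq 2^\w$ we have $E_\alpha \neq \varnothing$ by ISP, so we may select a point $x_\alpha \in E_\alpha$. Since $E_\alpha = f^0(E_\alpha) \subseteq G_\alpha$ and $\alpha \in \w_\sh(\beta)$, we get $x_\alpha \in G_\alpha \subseteq H_\beta' \subseteq H_\beta$. Thus $\alpha \mapsto x_\alpha$ is a well-defined map from $\w_\sh(\beta)$ into $H_\beta'$.

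Next I would verify this map is injective. Suppose $\alpha, \gamma \in \w_\sh(\beta)$ with $x_\alpha = x_\gamma$. Then this common point lies in $E_\alpha \cap E_\gamma$, so $E_\alpha \cap E_\gamma \neq \varnothing$, and by the lemma characterizing when the sets $E_\beta$ intersect, $\alpha = \gamma$. Hence the map is an injection of $\w_\sh(\beta)$ into $H_\beta'$. Since $\w(\beta) = \w_\sh(\beta)$ is uncountable by hypothesis, $H_\beta'$ is uncountable, and as $H_\beta' \subseteq H_\beta$, the set $H_\beta$ is uncountable as well.

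There is no real obstacle here: the argument is purely set-theoretic bookkeeping once the disjointness lemma is in hand, which is exactly why the statement can be described as immediate from the definitions. The only point requiring minor care is checking the chain of inclusions $E_\alpha \subseteq G_\alpha \subseteq H_\beta' \subseteq H_\beta$ and that the intersection lemma applies to elements of $\w_\sh(\beta)$, both of which are routine.
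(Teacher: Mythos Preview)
Your proof is correct and is precisely the unpacking of why the lemma is, as the paper says, ``immediate from the definitions'': pick one point from each nonempty $E_\alpha$ with $\alpha\in\w_\sh(\beta)$ and use the pairwise disjointness of the $E_\alpha$'s to get an injection of the uncountable set $\w_\sh(\beta)$ into $H_\beta'\subseteq H_\beta$. There is nothing to add.
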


	

Less immediately, we have the following.
\begin{lemma}\label{b_not_in_orb_closure_implies_Eb_not_meet_H}
	If $\beta,\chi\in 2^\w$ are such that $\beta\notin \overline{Orb(\chi)}$, then $E_\beta\cap H_\chi = \varnothing$.
\end{lemma}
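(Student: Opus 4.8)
\textbf{Proof plan for Lemma~\ref{b_not_in_orb_closure_implies_Eb_not_meet_H}.}
The plan is to argue by contradiction: suppose there exists a point $z \in E_\beta \cap H_\chi$. Since $H_\chi$ is the closure of $H_\chi' = \bigcup_{\alpha \in \w_\sh(\chi)} G_\alpha = \bigcup_{\alpha \in \w_\sh(\chi)} \bigcup_{i \in \w} f^i(E_\alpha)$, we may pick a sequence $z_k \to z$ with each $z_k = f^{i_k}(w_k)$ for some $w_k \in E_{\alpha_k}$ and some $\alpha_k \in \w_\sh(\chi)$. The key mechanism is that membership of a point in $E_\beta$ forces, on the long blocks $[a_i, b_i]$ of length $P > N$, a correspondence between the word $\beta$ and the itinerary of the point (up to $\epsilon/2$), because the orbit closures of $t_0, t_1, s$ are pairwise more than $2\epsilon$ apart; similarly, $f^{i_k}(w_k)$ being close to $z$ forces its itinerary on long blocks to match both $\beta$ (via $z \in E_\beta$) and a shift of $\alpha_k$ (via $w_k \in E_{\alpha_k}$, using Lemma~\ref{we_can_spec_images_of_beta}-type reasoning to track how the block structure of $E_{\alpha_k}$ sits relative to that of $E_\beta$ after applying $f^{i_k}$).

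First I would make precise the ``reading off'' step: if a point $x$ satisfies, for all $j$ in some block $[a_i, b_i]$, that $d(f^j(x), f^{j-a_i}(t_c)) \le \epsilon/2$ and also $d(f^j(x), f^{j-a_i}(t_{c'})) \le \epsilon/2$ for labels $c, c' \in \{0,1\}$ (or more generally points among $\{t_0, t_1, s\}$), then $c = c'$, since otherwise the two reference orbit segments would be within $\epsilon < 2\epsilon$ of each other. Next I would handle the alignment of block structures: a point in $G_{\alpha_k} = \bigcup_i f^i(E_{\alpha_k})$ has an itinerary that, on a cofinite collection of blocks $[a_i, b_i]$ (the ones past where the shift $f^{i_k}$ pushes things), looks like a word that is a shift of $\alpha_k$, hence (since $\alpha_k \in \w_\sh(\chi)$) a shift of something in $\w_\sh(\chi)$. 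Combining this with $z \in E_\beta$ via a convergence argument (for each fixed block, $f^j(z_k) \to f^j(z)$, so for large $k$ the block $[a_i,b_i]$ itinerary of $z_k$ is within $\epsilon/2$ of that of $z$, which is within $\epsilon/2$ of the $\beta$-reference), I would conclude that on each fixed block the label dictated by $\beta$ agrees with the label dictated by a shift of $\chi$'s $\omega$-limit words, and then let the block index range to recover that $\beta$ itself is (a limit of shifts of points in $\w_\sh(\chi)$, hence) in $\overline{Orb(\chi)}$, contradicting the hypothesis $\beta \notin \overline{Orb(\chi)}$.

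The main obstacle I expect is bookkeeping the interaction between the shift $f^{i_k}$ and the rigid block partition $a_0 < b_0 < a_1 < b_1 < \cdots$ with $a_i = i(N+P)$: applying $f^{i_k}$ to a point of $E_{\alpha_k}$ shifts its $t$-labeled blocks by $i_k$, which generally does not respect the $[a_i, b_i]$ grid, so the overlap between a shifted $E_{\alpha_k}$-block and a fixed $E_\beta$-block $[a_i, b_i]$ may be a proper sub-block. I would resolve this by using that $P > N$ gives each block slack, and by passing to a subsequence so that $i_k \bmod (N+P)$ is constant; then the shifted grid is a fixed translate of the original grid, and on the overlap intervals — which have length bounded below, growing with the block index in the relevant range — the ``reading off'' argument applies verbatim to extract the label. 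A secondary point to be careful about is that $\w_\sh(\chi)$ need not be shift-invariant in a way that makes ``a shift of a point of $\w_\sh(\chi)$'' automatically lie in $\overline{Orb(\chi)}$; but since $\w_\sh(\chi) \subseteq \overline{Orb(\chi)}$ and $\overline{Orb(\chi)}$ is forward-invariant and closed, any such limit of forward shifts does land in $\overline{Orb(\chi)}$, closing the contradiction.
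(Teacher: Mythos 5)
Your proposal is correct and follows essentially the same route as the paper: argue by contradiction with a sequence in $H_\chi'$ converging to a point of $E_\beta$, read off the block labels via the $2\epsilon$-separation of the reference orbits, and use $P>N$ to absorb the misalignment between the shifted $E_{\alpha_k}$-grid and the standard grid. The only cosmetic differences are that the paper fixes upfront a prefix $\beta_{[0,r]}$ not occurring in $\chi$ and derives a metric contradiction at a single mismatched block (rather than recovering $\beta\in\overline{Orb(\chi)}$ in the limit), and it normalizes each $z_k$ individually by at most $N$ iterates instead of passing to a constant-offset subsequence.
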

\begin{proof}
	Let $\beta, \chi\in 2^\w$ such that $\beta\notin \overline{Orb(\chi)}$. Let $r$ be such that $\beta_{[0, r]}$ does not occur in $\chi$. 
	Suppose, by way of contradiction, that $\{z_k\}_{k\in \w}$ is a sequence of points in $H_\chi'$ which limits to $z\in E_\beta$. Then for all but finitely many $z_k$ we have, for $j\leq b_r$, 
	\begin{equation}\label{zk_and_z_are_close}
		d(f^j(z_k), f^{j}(z)) \leq \eps/2
	\end{equation}

	
	Since each $z_k$ belongs to  $H_\chi'$, for each $k$ there exists $v_k\in \w$ minimal such that $z_k\in f^{v_k}(E_{\alpha_k})$ for some $\alpha_k$. As such, by Lemma \ref{we_can_spec_images_of_beta}, for each $i\in \w$ with $a_i > v_k$ we have $f^{a_i-v_k}(z_k)\in E_{\sh^i(\alpha_k)}$. 
	For each $k$, let $i_k$ be minimal so that $a_{i_k}\geq v_k$ and let $q_k = a_{i_k}-v_k.$
	Then $f^{q_k}(z_k)\in E_{\theta^k}$ for some $\theta^k\in \w(\chi)$.
	
	Notice that by construction of the $E_\alpha$ sets, any point in $E_\alpha$ is exactly $P+N$ many iterates away from being in $E_{\sh(\alpha)}.$ So any point in $H_\chi'$ is fewer than $P+N$ iterates away from being in some $E_\alpha$ (if it take $N+P$ many iterates, then we must have started out in some $E_{\sh^{-1}(\alpha)}$ set and thus needed 0 iterates to be in an $E$ set). 
	Moreover, for any point in $E_\alpha$, the first $P$ many iterates are controlled by specification, and iterates number $P+1, P+2, \dots, P+N-1$ are all not controlled by specification.
	So, if $q_k < N$, then  $z_k$ is in a part of the orbit of $E_{\alpha^k}$ that is not controlled by specification.
	Similarly, if $q_k \geq N$, then $z_k$ is in a part of the orbit of $E_{\alpha^k}$ that \emph{is} controlled by specification.
	As such we have two cases. 
	
	If $q_k < N$, let $y_k = q_k$ and let $\delta^k = \theta^k$. So $f^{y_k}(z_k)\in E_{\delta^k}$.
	If, on the other hand, $q_k \geq N$, then since $z_k$ is in a part of the orbit of $E_{\alpha^k}$ that is controlled by specification, we let $y_k = 0$ and let $\delta^k = \alpha^k$. 
	
	Thus, in either case we have $f^{a_i}(f^{y_k}(z_k)) = f^{a_i+y_k}(z_k)\in B_{\eps/2}(Orb(t_{\delta^k_i}))$ for all $i\in \w$. 
	
	But, $\beta_{[0, r]}$ does not occur in $\chi$, so for each $k$ there is some index $i_k \leq r$ such that $\beta_{i_k}\neq \delta^k_{i_k}$ and so $f^{a_{i_k}}(f^{y_k}(z_k)) \in B_{\eps/2}(t_{1-\beta_{i_k}})$.
	
	Since $z\in E_\beta$, we have $f^{a_{i_k}}(z) \in B_{\eps/2}(t_{\beta_{i_k}})$ for each $k\in \w$, but more importantly $f^{a_{i_k}+y_k}(z)\in B_{\eps/2}(f^{y_k}(t_{\beta_{i_k}}))$ since $y_k \leq N$ and so $a_{i_k}+y_k<b_{i_k}.$ 
	
	But this, combined with (\ref{zk_and_z_are_close}), means $f^{a_{i_k}+y_k}(z)\in B_{\eps}(Orb(t_{\beta_{i_k}})) \cap B_{\eps}(Orb(t_{1-\beta_{i_k}}))$, which is empty, a contradiction.
\end{proof}

We now demonstrate how, given $\beta$ and $\chi$ in $2^\omega$, to construct a pair of points whose $\w$-limit sets meet and will each contain one of $H_\beta$ or $H_\chi$ and miss the other.



For $n\in \w, z\in H_\beta$ let $U_{z,n} = \{ y: d(f^i(y), f^i(z)) <\eps/4\text{ for }i\leq b_n\}$. Then $\mathcal{U}_n = \{U_{z,n}:z\in H_\beta\}$ is an open cover of $H_\beta$. 
$H_\beta$ is closed subset of the Lindel{\"o}f space $X$ and is therefore Lindel{\"o}f \cite{Munkres}. 
Thus, we can choose a countable set $\{U_{z_1, n}, U_{z_2,n},\dots\}$ which is an open cover of $H_\beta$. Finally, let $H_{\beta, n} = \{z_1, z_2, z_3,\dots\}$. Since each $H_{\beta, n}$ is countable, we can enumerate the sets:
\begin{align*}
H_{\beta, 1} &= \{\gamma_{1, 0}, \gamma_{1, 1}, \gamma_{1,2},\dots\}\\
H_{\beta, 2} &= \{\gamma_{2, 0}, \gamma_{2, 1}, \gamma_{2,2},\dots\}\\
H_{\beta, 3} &= \{\gamma_{3, 0}, \gamma_{3, 1}, \gamma_{3,2},\dots\}
\end{align*}
We can list the elements of the union of the $H_{\beta, i}$ sets via the following diagonalization: $$(\gamma_{1,0}, \gamma_{1,1}, \gamma_{2,0}, \gamma_{1,2}, \gamma_{2,1}, \gamma_{3,0}, \dots)$$
Since the collection of sets $H_{\beta ,n}$ are not necessarily disjoint, the previous list may have repetition, but that does not affect the proof. 

Let $M$ be given by ISP with respect to $\epsilon/8$. Now, apply Lemma \ref{spec pattern} to find a point $p_\beta$ such that $f^i(p_\beta)$ is less than $\epsilon/8$ from the $i$-th term the sequence 
\begin{align*}
     \xi, (*)^M, & \gamma_{1,0},\dots, f^{b_1}(\gamma_{1,0}), (*)^M, s, (*)^M, \\
    & \gamma_{1,1},\dots, f^{b_1}(\gamma_{1,1}), (*)^M, s, f(s), (*)^M,\\
    & \gamma_{2,0},\dots, f^{b_2}(\gamma_{2,0}), (*)^M, s, f(s), f^2(s), (*)^M,\\ & \dots.
\end{align*}
whenever the $i$-th term is not a $*$.
Note that the sequence follows the orbit of the point $\gamma_{j, w}$ for $b_j$-many iterates, and we follow the orbit of $s$ for $(n-1)$-many iterates $n-$th time we arrive at $s$ in the sequence.

We now turn our attention to the $\omega$-limit set of $p_\beta$.

\begin{lemma}
    $H_\beta\cup \{s\} \subseteq \w(p_\beta)$. 
\end{lemma}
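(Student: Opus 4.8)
The plan is to show both inclusions $\{s\}\subseteq\w(p_\beta)$ and $H_\beta\subseteq\w(p_\beta)$ by exhibiting, for each target point $w$ and each $\delta>0$, infinitely many iterates $n$ with $d(f^n(p_\beta),w)<\delta$. The key is that the defining sequence for $p_\beta$ (constructed via Lemma~\ref{spec pattern}) visits longer and longer initial segments of the orbit of $s$ infinitely often, and also visits $f^0,f^1,\dots,f^{b_j}$ of every listed point $\gamma_{j,w}$ infinitely often (indeed each $\gamma_{j,w}$ reappears each time a later row of the diagonalization passes over it, since the $\gamma$'s are listed with the diagonal enumeration). Throughout I would use that $f^i(p_\beta)$ is within $\eps/8$ of the $i$-th non-$*$ term, so any estimate about the template sequence transfers to $p_\beta$ up to $\eps/8$.

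First I would handle $s$. By construction the $n$-th time the template arrives at $s$ it follows $s,f(s),\dots,f^{n-1}(s)$, so in particular there is an iterate $k_n$ of $p_\beta$ with $d(f^{k_n}(p_\beta),s)<\eps/8$, and the $k_n$ are unbounded; hence $s\in\w(p_\beta)$. Next, for a point $z\in H_\beta$ and $\delta>0$, pick $n$ with $\eps/4<\delta$ wait — more carefully: pick $n$ large enough that $b_n$ is large, and use the Lindel\"of cover $\mathcal U_n$: there is some $z_m=\gamma_{n,m}\in H_{\beta,n}$ with $z\in U_{z_m,n}$, i.e. $d(f^i(z),f^i(z_m))<\eps/4$ for all $i\le b_n$; in particular $d(z,z_m)<\eps/4$. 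The template follows the orbit of $\gamma_{n,m}$ for $b_n\ge 0$ iterates (at least its $0$-th term $\gamma_{n,m}$ appears), so some iterate $f^{k}(p_\beta)$ is within $\eps/8$ of $\gamma_{n,m}$, giving $d(f^k(p_\beta),z)<\eps/8+\eps/4$. Since $\gamma_{n,m}$ recurs in the diagonal listing for every sufficiently large stage, these iterates $k$ are unbounded; letting $n\to\infty$ we get points of $H_\beta$ approximated arbitrarily well (the bound $\eps/8+\eps/4$ is crude but for membership in $\w(p_\beta)$ we actually need $\delta$ arbitrary, so I would instead, for given $\delta$, first choose $n$ so that some refinement gives control — see below).

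The subtle point, and the main obstacle, is getting \emph{arbitrarily} small $\delta$ rather than just $\eps/2$-closeness: the cover $\mathcal U_n$ only gives $\eps/4$-control. The fix is expansivity together with Lemma~\ref{Expansivity_Implies_Closeness}: if $d(f^i(p_\beta),f^i(z_m))<\eta$ for all $i\le b_n$ (which holds since these are all $<\eps/8+\eps/4<\eta$), and $d(f^i(z),f^i(z_m))<\eta$ for $i\le b_n$ as well, then $d(p_\beta\text{-iterate},z_m)<\eta\lambda^{-b_n}$ and $d(z,z_m)<\eta\lambda^{-b_n}$, so the relevant iterate of $p_\beta$ is within $2\eta\lambda^{-b_n}$ of $z$; since $b_n\to\infty$ as $n\to\infty$ this beats any $\delta$. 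So the real order of operations is: given $z\in H_\beta$ and $\delta>0$, choose $n$ with $2\eta\lambda^{-b_n}<\delta$, find $\gamma_{n,m}$ with $z\in U_{\gamma_{n,m},n}$, locate the block in the template that tracks $\gamma_{n,m}$ for $b_n$ iterates, read off the corresponding iterate $k$ of $p_\beta$, apply Lemma~\ref{Expansivity_Implies_Closeness} twice, and note $k$ can be taken as large as we like because $\gamma_{n,m}$ appears in infinitely many rows of the diagonalization. I would also double-check that the block tracking $\gamma_{n,m}$ really has length $b_n+1$ in the template (it does by the remark following the construction), and that the $(*)^M$ gaps of length $M\ge N$ cause no interference since they only separate consecutive tracked blocks and ISP was invoked with relaxation time $M$.
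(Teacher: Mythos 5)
Your overall strategy is the same as the paper's: handle $s$ via the ever-lengthening blocks shadowing $Orb(s)$, and handle $z\in H_\beta$ by picking, for each $n$, a $\gamma_{n,l_n}$ from the Lindel\"of cover $\mathcal U_n$ with $z\in U_{\gamma_{n,l_n},n}$, then applying Lemma~\ref{Expansivity_Implies_Closeness} twice (once to the pair $(z,\gamma_{n,l_n})$, once to the pair $(f^{k}(p_\beta),\gamma_{n,l_n})$) and the triangle inequality to get a bound of order $\eta\lambda^{-b_n}\to 0$. That second half is correct and is essentially the paper's argument, and you rightly identified that expansivity is what upgrades the fixed $\eps/4$- and $\eps/8$-tolerances to arbitrarily small ones.

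Two points need repair. First, your treatment of $s$ as written is a non sequitur: from ``$d(f^{k_n}(p_\beta),s)<\eps/8$ for unboundedly many $k_n$'' you cannot conclude $s\in\w(p_\beta)$ --- in a non-compact space this does not even guarantee the orbit accumulates anywhere, let alone at $s$ itself. The fix is exactly the device you deploy for $H_\beta$: the $n$-th visit shadows $s,f(s),\dots,f^{n-1}(s)$ within $\eps/8<\eta$, so Lemma~\ref{Expansivity_Implies_Closeness} gives $d(f^{k_n}(p_\beta),s)<\eta\lambda^{-(n-1)}\to 0$; this is what the paper does. Second, your claim that each $\gamma_{n,m}$ ``reappears each time a later row of the diagonalization passes over it'' is unjustified --- the diagonal enumeration lists each doubly-indexed element once (up to accidental coincidences among the sets $H_{\beta,n}$), so a fixed $\gamma_{n,m}$ need not recur. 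Fortunately you do not need recurrence: for each $n$ you use a \emph{different} approximant $\gamma_{n,l_n}$ drawn from row $n$, and the template block tracking it sits ever later in the specification pattern as $n\to\infty$, so the approximating iterates are automatically unbounded. With those two adjustments your argument coincides with the paper's proof.
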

\begin{proof}
    To see that $s\in \w(p_\beta)$, use the specification pattern of $p_\beta$ to define a sequence of integers $i_k$ such that $f^{i_k}(p_\beta) \in B_{\eps/8}(s)$ and $f^{i_k+j}(p_\beta)\in B_{\eps/8}(f^j(s))$ for all $j\leq k, k\in \w$. Lemma \ref{Expansivity_Implies_Closeness} then gives us that $\lim_{k\to \infty}f^{i_k}(p_\beta) = s$.
    
    Now to see that $H_\beta$ is contained in $\w(p_\beta)$, let $\upsilon \in H_\beta$. For each $n\in \N$ there exists an $l_n$ such that $\upsilon \in U_{\gamma_{n, l_n}, n}$. 
    Notice that since $\upsilon\in U_{\gamma_{n, l_n}, n}$ we have that $d(f^i(\upsilon), f^i(\gamma_{n,l_n}))<\eps/4$ for all $i\leq b_n$. Thus, by the expansivity of $f$,
    together with Lemma \ref{Expansivity_Implies_Closeness}
    we have that $\lim_{n\to \infty}\gamma_{n,l_n} = \upsilon$.
    
    Now define a sequence $i_n$ such that $f^{i_n}(p_\beta)\in B_{\eps/8}(\gamma_{n, l_n})$ and for all $j\in \{0, 1, \dots, b_{n}\}$ we have $f^{i_n+j}(p_\beta)\in B_{\eps/8}(f^{j}(\gamma_{n, l_n}))$. Such a sequence exists by the construction of $p_\beta$ via ISP. 
    Then, 
    \[
        d(f^{i_n}(p_\beta), \upsilon) \leq d(f^{i_n}(p_\beta), \gamma_{n, l_n}) + d(\gamma_{n, l_n} , \upsilon).
    \]
    But, as we have already seen $d(\gamma_{n, l_n} , \upsilon)\to 0$ as $n\to \infty$. Moreover, by Lemma \ref{Expansivity_Implies_Closeness}, since $$d(f^{i_n+j}(p_\beta), f^{j}(\gamma_{n, l_n})) <\eta$$ for all $j<b_n$, we have $d(f^{i_n}(p_\beta), \gamma_{n, l_n}) < \eta\lambda^{-b_n}$. So as $n\to\infty, d(f^{i_n}(p_\beta), \gamma_{n, l_n}) \to 0$. 
\end{proof}

\begin{lemma}\label{omega_p_gamma_is_disjoint_from_H_beta}
	If $\beta, \gamma\in2^\w$ are such that $\overline{Orb(\beta)}\cap \overline{Orb(\gamma)} = \varnothing$, then $\w(p_\gamma) \cap H_\beta' = \varnothing$.
\end{lemma}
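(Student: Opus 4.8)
The plan is to argue by contradiction, mirroring the structure of Lemma~\ref{b_not_in_orb_closure_implies_Eb_not_meet_H} but now tracking the point $p_\gamma$ rather than an abstract sequence in $H_\chi'$. Suppose some point $z$ lies in $\w(p_\gamma)\cap H_\beta'$. Since $z\in H_\beta' = \bigcup_{\alpha\in\w_\sh(\beta)}G_\alpha$, there is $\alpha\in\w_\sh(\beta)$ and $v\in\w$ with $z\in f^v(E_\alpha)$; choosing $v$ minimal and then pushing forward by Lemma~\ref{we_can_spec_images_of_beta}, for every sufficiently large index $i$ the point $f^{a_i-v}(z)$ lies in some $E_{\sh^{i'}(\alpha)}$ with $\sh^{i'}(\alpha)\in\w(\beta)$. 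Reasoning exactly as in the earlier lemma (splitting into the cases $q_k<N$ and $q_k\ge N$ according to whether $z$ sits in a specification-controlled part of the orbit), we find that for some small shift $y\le N$ the point $w=f^y(z)$ satisfies $f^{a_i}(w)\in B_{\eps/2}(Orb(t_{\delta_i}))$ for all $i\in\w$, where $\delta\in\w(\beta)$. In particular, since $\delta$ is a limit of words appearing in $\beta$, every finite initial word of $\delta$ occurs in $\beta$.

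Next I would extract the analogous structure from the side of $p_\gamma$. Because $z\in\w(p_\gamma)$, there is a sequence $m_k\to\infty$ with $f^{m_k}(p_\gamma)\to z$, hence (after passing to the orbit of $w$) $f^{m_k+y}(p_\gamma)\to w$, and by continuity of $f^{a_i}$ we get $f^{m_k+a_i}(p_\gamma)$ close to $f^{a_i}(w)\in B_{\eps/2}(Orb(t_{\delta_i}))$ for each fixed $i$ and $k$ large. The key point is now to consult the specification pattern defining $p_\gamma$: along its orbit, $p_\gamma$ alternates between following $b_j$-blocks of orbits of the $\gamma_{j,w}\in H_\gamma$ (each of which, by the $E$-set structure coming from $\gamma\in\w(\chi)$-type reasoning — here from $\gamma$ itself — sees letters of $\gamma$) and following increasingly long initial segments of $Orb(s)$, separated by gaps of $*$'s of length $M$. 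So for any window of bounded length in the orbit of $p_\gamma$, that window either lies in an $s$-block, in a $*$-gap, or within a single $H_\gamma$-block whose controlled iterates reproduce (up to $\eps/2$) a piece of $Orb(t_{\gamma'_i})$ for some $\gamma'\in\w(\gamma)$. Consequently the word of length $r+1$ (for suitable fixed $r$) read off from $w$ via the $t$-labels must be realizable from $\gamma$-data, i.e.\ occurs in $\overline{Orb(\gamma)}$.

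The contradiction is then that the same length-$(r+1)$ labelling of $w$ is forced to occur in both $\overline{Orb(\beta)}$ and $\overline{Orb(\gamma)}$; choosing $r$ large enough that $\overline{Orb(\beta)}$ and $\overline{Orb(\gamma)}$ share no common word of length $r+1$ — possible precisely because these two closed shift-invariant sets are disjoint in the compact space $2^\w$, so by a standard compactness argument the "common $(r+1)$-block" sets are empty for $r$ large — gives the desired contradiction, since the $2\eps$-separation of $Orb(t_0)$ and $Orb(t_1)$ means $B_{\eps/2}(Orb(t_0))$ and $B_{\eps/2}(Orb(t_1))$ are disjoint and hence the $t$-labelling of $w$ at each index is unambiguous.

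The main obstacle I anticipate is the bookkeeping in the second paragraph: carefully justifying that a point in $\w(p_\gamma)$, or a bounded forward shift of it, must have its $t$-coordinate labelling agree with a word occurring in $\overline{Orb(\gamma)}$, despite the $s$-blocks and $*$-gaps interleaved in the specification pattern of $p_\gamma$. One must rule out the possibility that the relevant window straddles an $s$-block or a $*$-gap; this is handled by noting that the $s$-blocks are eventually longer than $b_r$ (so a window that meets one far enough out lies inside it, and then $w$ would be within $\eps/2$ of $Orb(s)$, contradicting the $t_{\delta_i}$-conclusion via the $2\eps$-separation of $Orb(s)$ from $Orb(t_0),Orb(t_1)$), while a window inside a single $H_\gamma$-block inherits the $E_\alpha$-structure for $\alpha\in\w(\gamma)$ by Lemma~\ref{we_can_spec_images_of_beta}. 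Everything else is a repackaging of the case analysis and the $B_\eps\cap B_\eps=\varnothing$ contradiction already used in Lemma~\ref{b_not_in_orb_closure_implies_Eb_not_meet_H}.
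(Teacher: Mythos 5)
Your overall skeleton (argue by contradiction, locate $f^{m_k}(p_\gamma)$ within the specification pattern of $p_\gamma$, run a case analysis) matches the paper's, and your combinatorial endgame rests on a correct observation: two disjoint closed, forward-invariant subsets of the compact space $2^\w$ cannot share words of every length, by the standard convergent-subsequence argument. The genuine gap is in the step that feeds that endgame: the claim that the $t$-labelling read off $w=f^y(z)$ at the checkpoint times $a_0,\dots,a_r$ ``must be realizable from $\gamma$-data,'' i.e.\ occurs in $\overline{Orb(\gamma)}$. Two things block this. First, the points $\gamma_{i,j}$ whose orbits $p_\gamma$ shadows live in $H_\gamma$, which is the \emph{closure} of $\bigcup_{\alpha\in\w(\gamma)}G_\alpha$; a point of $H_\gamma$ is only a limit of points carrying $E$-structures with varying offsets, so its orbit does not come equipped with a $\{t_0,t_1\}$-labelling to compare against. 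Second, even for a genuine point of some $f^v(E_{\alpha})$, the window $[m_k+y,\,m_k+y+b_r]$ sits inside the $\gamma_{i,j}$-block at an arbitrary offset $c$; the checkpoint times of $w$ then correspond to times $c+a_l$ in that orbit, and if $c\bmod (N+P)$ lands in the uncontrolled range, every checkpoint of $w$ falls in a gap of the $E$-structure and the labelling you read need not be a word of any $\alpha\in\w(\gamma)$ at all, so such windows yield no contradiction. (Your treatment of straddling is also incomplete: a window can overlap the tail of an $s$-block or of a $\gamma$-block together with the following $(*)^M$-gap, which is not excluded by ``the $s$-blocks are eventually longer than $b_r$.'')

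The paper avoids both problems by never going symbolic on the $\gamma$ side. It uses expansivity (Lemma \ref{Expansivity_Implies_Closeness}) to upgrade ``$f^{k_n}(p_\gamma)$ shadows $Orb(\gamma_{i_n,j_n})$, respectively $Orb(s)$, for unboundedly many iterates'' into actual convergence of a bounded shift of $f^{k_n}(p_\gamma)$ to points of the closed, forward-invariant set $H_\gamma$, respectively to $s$; hence the limit $f^m(q)$ itself lies in $H_\gamma$ or coincides with a shift of $s$. The contradiction is then $E_\delta\cap H_\gamma=\ns$ from Lemma \ref{b_not_in_orb_closure_implies_Eb_not_meet_H} (which is precisely where the offset bookkeeping you are missing was done once and for all), or the $2\eps$-separation from $\overline{Orb(s)}$. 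If you want to keep the word-counting finish, you would still have to reprove something equivalent to Lemma \ref{b_not_in_orb_closure_implies_Eb_not_meet_H} in order to extract a legitimate $\gamma$-word from a point of $H_\gamma$, so the symbolic route does not actually save the hard step.
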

\begin{proof}
	Let $q\in H_\beta'$. Then $q\in\bigcup _{\alpha \in \w(\beta)} G_\alpha$.
	As such, be Lemma \ref{we_can_spec_images_of_beta} there is some $m\in\w$ such that $f^m(q)\in E_{\alpha}$ for some $\alpha\in \w(\beta)$.  
	By definition of $E_\alpha$ we have $d(f^{j}(f^m(q)), f^{j-a_i}(t_{\alpha_i}))\leq \eps/2$ for all $j\in\{a_i,\dots, b_i\}, i\in\w$.
	
	
	Now suppose, by way of contradiction, $q\in \w(p_\gamma)$. This implies that $f^m(q)\in \w(p_\gamma)$ as well since $\w$-limit sets are forward invariant. 
	Let $\{k_n\}_{n\in\w}$ be a sequence of natural numbers such that $\lim_{n\to\infty}f^{k_n}(p_\gamma) = f^m(q)$.

	
	By the construction of $p_\gamma$, for every $k_n$, $f^{k_n}(p_\gamma)$ is either 
	\begin{enumerate}
		\item in a part of $Orb(p_\gamma)$ that is controlled by specification and within $\eps/8$ of $\xi$ (but this only occurs once at the start of the spec pattern of $p_\gamma$), 
		\item in a part of $Orb(p_\gamma)$ that is controlled by specification and within $\eps/8$ of the orbit of $s$,
		\item in a part of $Orb(p_\gamma)$ that is controlled by specification and within $\eps/8$ of the orbit of some $\gamma_{i, j}$, or
		\item not in a part of $Orb(p_\gamma)$ that is controlled by specification, in which case $f^{k_n}(p_\gamma)$ is fewer than $M+1$ iterates away from being within $\eps/8$ of some $\gamma_{i,j}$ or $s$.
	\end{enumerate}
	For the sake conciseness, let $p_n = f^{k_n}(p_\gamma)$. Observe that at least one of conditions (1)-(4) must be met infinitely often, and since condition (1) is only satisfied once, at least one of conditions (2)-(4) must be met infinitely often.

	Suppose condition (2) is satisfied by infinitely many $p_n$. Then $f^m(q) = \lim_{n\to\infty} p_n$ would be within $\eps/4$ of $Orb(s)$. But this cannot be since, by choice of $\epsilon$, we have $d(\overline{Orb(s)}, \overline{Orb(t_{\alpha_0})}) > 2\eps$, and $f^m(q)$ is within $\eps/2$ of $t_{\alpha_0}$ by virtue of being in $E_\alpha$.

	Suppose condition (3) is satisfied by infinitely many $p_n$. Then for infinitely many $n$, $p_n$ is within $\eps/8$ of some point $\gamma'_{i_n, j_n}$ in the orbit of some $\gamma_{i_n, j_n}$. Let $L_n$ be minimal so that $L_n +k_n = b_i +1$ for some $i\in \w$.
	Then, $p_n$ is guaranteed to be within $\eps/8$ of the orbit of $\gamma_{i_n, j_n}$ for at least $L_n$ many iterates. 
	
	We have two possibilities. Either $\{L_n\}_{n\in \w}$ is unbounded or not.  If it is unbounded, then we can pass to $\{p_{n_r}\}_{r\in\w}$, a subseqeunce of $p_n$ on which $L_{n_r}$ is monotone increasing.
	Then by Lemma \ref{Expansivity_Implies_Closeness}, we have that $d(p_{n_r}, \gamma'_{i_{n_r}, j_{n_r}})\to 0$. Moreover, since $p_{n_r}\to f^m(q)$, we have $d(p_{n_r}, f^m(q)) \to 0$.
	
	So, by the triangle inequality
	\[
		d(f^m(q), \gamma'_{i_{n_r}, j_{n_r}}) \leq d(f^m(q), p_{n_r}) + d(p_{n_r}, \gamma'_{i_{n_r}, j_{n_r}})
	\]
	The latter two terms go to 0 as $n_r\to \infty$ and so $d(f^m(q), \gamma'_{i_{n_r}, j_{n_r}})\to 0$. This means that $\lim_{r\to\infty} \gamma'_{i_{n_r}, j_{n_r}} = f^m(q)$. 
	But each $\gamma'_{i_{n_r}, j_{n_r}} \in H_\gamma$ since each $\gamma'_{i_{n_r}, j_{n_r}}$ is in the orbit of some $\gamma_{i,j}\in H_\gamma$ and $H_\gamma$ is forward invariant. 
	Moreover, $H_\gamma$ is closed, which implies that $f^m(q)\in H_\gamma$. 
	But, $E_\alpha \cap H_\gamma = \varnothing$ by Lemma \ref{b_not_in_orb_closure_implies_Eb_not_meet_H}, a contradiction.
	
	If, on the other hand, $\{L_n\}_{n\in\w}$, is bounded, then we can pass to $\{p_{n_r}\}_{r\in\w}$, a subseqeunce of $p_n$, on which $L_{n_r}$ is constant.
	Let $L$ be this constant. Then $f^{M+L}(p_n)$ is within $\eps/8$ of $s$ for all $r$. 
	Additionally, every time an iterate of $p_\gamma$ in the specification pattern is within $\eps/8$ of $s$, it will stay within $\eps/8$ of $Orb(s)$ for increasingly more iterates. 
	Hence as $n_r\to \infty$ increasingly more iterates of $f^{M+L}(p_{n_r})$ will stay within $\eps/8$ of $Orb(s)$. 
	This fact, in conjunction with Lemma \ref{Expansivity_Implies_Closeness}, implies that the sequence $\{p_{n_r}\}_{r\in\w}$ is actually converging to $s$. 
	But, the forward orbit of $f^m(q)$ will be within $\eps/2$ of $\{t_0, t_1\}$ infinitely often.
	So, since $d(\overline{Orb(s)}, \overline{Orb(t_{i})}) > 2\eps$ for $i\in \{0,1\}$, we cannot have $\{f^{M+L}(p_n)\}_{n\in\w}$  converging to $s$ and to $f^{M+L}(f^m(q))$. 
	
	Lastly, suppose that for infinitely many $p_n$ condition (4) is satisfied. If infinitely many $p_n$ are not in a part of $Orb(p_\gamma)$ controlled by specification and are fewer than $M+1$ iterates away from being within $\eps/8$ of $s$, then there is some $w\leq M+1$ such that infinitely many $p_n$ are exactly $w$ iterates away from being within $\eps/8$ of $s$. 
	By construction of $p_\gamma$, as $n\to \infty$, increasingly more iterates of $f^w(p_n)$ will be within $\eps/8$ of $s$ which implies $f^{w}(p_n)\to s$ by Lemma \ref{Expansivity_Implies_Closeness}. 
	This would mean $f^m(q) = s$, a contradiction. 
	
	On the other hand, if infinitely many $p_n$ are fewer than $M+1$ iterates away from being within $\eps/8$ of some $\gamma_{i_n,j_n}$, then there is some $w\leq M+1$ such that infinitely many $p_n$ are exactly $w$ iterates away from being within $\eps/8$ of $\gamma_{i_n,j_n}$. By construction of the orbit of $p_\gamma$,  this means that $d(f^j(f^w(p_n)), f^j(\gamma_{i_n, j_n})) < \eps/8$ for $j\leq b_{i_n}$. 
	
	Again, we have two possibilities. Either the collection $\{i_n\}_{n\in\w}$ is unbounded or bounded. If the collection is unbounded, then we can pass to $\{p_{n_r}\}_{r\in\w}$, a subseqeunce of $\{p_n\}_{n\in\w}$ on which $\{i_{n_r}\}_{r\in\w}$ is monotone increasing and $f^w(p_{n_r})$ is within $\eps/8$ of $\gamma_{i_{n_r}, j_{n_r}}$.
	Since $i_{n_r}$ is monotone increasing, $f^w(p_{n_r})$ will be within $\eps/8$ of $Orb(\gamma_{i_{n_r}, j_{n_r}})$ for increasingly many iterates, which by Lemma \ref{Expansivity_Implies_Closeness} means $d(f^w(p_{n_r}), \gamma_{i_{n_r}, j_{n_r}})\to 0$. As before, this means $d(f^w(f^m(q)), \gamma_{i_n, j_n}) \to 0$. 
	Now, let $g\in \w$ be such that $f^{g}(f^w(f^m(q)))\in E_\delta$ for some $\delta\in \w(\beta)$. Such a $g$ exists by Lemma \ref{we_can_spec_images_of_beta}.
	 
	Since $f^g$ is continuous, we have $\lim_{r\to\infty}f^g(\gamma_{i_{n_r}, j_{n_r}}) = f^g(f^w(f^m(q)))$. 
	But, $H_\gamma$ is forward invariant, so $f^g(\gamma_{i_n, j_n})\in H_\gamma$, and $H_\gamma$ is closed, so $f^g(f^w(f^m(q)))\in H_\gamma$.
	But, $\delta\notin \overline{Orb(\gamma)}$ so $E_\delta\cap H_\gamma = \varnothing$, a contradiction.
	
	If, on the other hand $\{i_n\}_{n\in \w}$ is bounded, we can pass to $\{p_{n_r}\}_{r\in\w}$, a subsequence of $\{p_n\}_{n\in\w}$, such that $\{i_{n_r}\}_{r\in\w}$ is constant. 
	Let $L$ be this constant. Then $f^{M+b_L+w}(p_{n_r})$ is within $\eps/8$ of $s$. And again, since we stay within $\eps/8$ of the orbit of $s$ for increasingly many iterates each time we visit in the specification pattern of $p_\gamma$, this means $f^{M+b_L+w}(p_{n_r}) \to s$. But as before, since inifnitely many iterates of $f^m(q)$ are within $\eps/2$ of $\{t_0, t_1\}$ and $d(\overline{Orb(s)}, \overline{Orb(t_{\alpha_i})}) > 2\eps$ for $i\in \{0,1\}$, we cannot have $f^{M+b_L+w}(p_{n_r})$ converging to $s$ and $f^{M+b_L+w}(f^m(q))$. 
	
	Thus, $f^m(q)\notin \w(p_\gamma)$, implying $q\notin \w(p_\gamma)$ which is the desired contradiction. 
\end{proof}

We are now ready to prove our main theorem.

\begin{maintheorem*}
	Let $(X,f)$ be an expansive dynamical system such that $X$ is Lindel{\"o}f, every open set of $X$ is uncountable, $(X,f)$ has ISP, and there exist $t_0$, $t_1, s \in X$ such that $\overline{Orb(t_0)}, \overline{Orb(t_1)}, \overline{Orb(s)}$ are each a non-zero distance away from each other. Then $(X,f)$ exhibits dense $\w$-chaos.
\end{maintheorem*}

\begin{proof}
	Fix $\xi$ in $X$ and $U$ a neighborhood thereof. Let $W\subset 2^\w$ be an uncountable set of non-periodic points with pairwise disjoint orbits and uncountable $\w$-limit sets. Such a set does exist \cite{uncountable_minimal_sets_in_2w}. 
	Let $\beta, \gamma \in W$. Find $p_\beta$ and $p_\gamma$ in $X$ following the method outlined above. By Lemma \ref{E_beta_not_periodic}, since $\beta, \gamma$ are not periodic, $E_\beta$ and $E_\gamma$ do not contain periodic points implying that $H_\beta$ and $H_\gamma$ do not contain \emph{only} periodic points. 
	Moreover, $\w(p_\beta)\cap \w(p_\gamma)\ni s$, and $\w(p_\beta)\setminus \w(p_\gamma) \supset H_\beta'$ by Lemma \ref{omega_p_gamma_is_disjoint_from_H_beta} which is uncountable as shown in Lemma \ref{H_beta_sets_are_uncountable}.
	Similarly, $\w(p_\gamma)\setminus \w(p_\beta) \supset H_\gamma'$ which is also uncountable.
	There are uncountably many such $\beta$ and $\gamma$ in $W$. From this, it follows that $(X,f)$ exhibits $\w$-chaos.
	
	Moreover, $p_\beta, p_\gamma \in B_{\eps/8}(\xi) \subset U$. Thus $(X,f)$ exhibits dense $\w$-chaos.
\end{proof}
Note that if $X$ contains some open sets which are not uncountable, then $X$ will still exhibit $\w$-chaos, though not densely. 

\bibliographystyle{plain}
\bibliography{OmegaChaosBib}
\end{document}